\newtheorem{prop}{Proposition}
\newtheorem{Cor}{Corollary}
\newtheorem{defin}{Definition}
\newtheorem{rem}{Remark}
\begin{document}

\title{Unified Bernoulli-Euler polynomials of Apostol type}


\author{
Hac\`{e}ne \textsc{Belbachir}, Yahia \textsc{Djemmada}, Slimane \textsc{Hadj-Brahim}
}


\institute{H. BELBACHIR \at
               USTHB, Faculty of Mathematics, RECITS Laboratory P. Box $32$,\\ El-Alia, $16111$ Bab-Ezzouar, Algiers, Algeria.\\
              \email{hacenebelbachir@gmail.com or hbelbachir@usthb.dz}
              \and
              Y. DJEMMADA (corresponding author) \at
              USTHB, Faculty of Mathematics, RECITS Laboratory P. Box $32$,\\ El-Alia, $16111$ Bab-Ezzouar, Algiers, Algeria.\\              \email{ydjemmada@usthb.dz or yahia.djem@gmail.com}
              \and
              S. HADJ-BRAHIM \at
              USTHB, Faculty of Mathematics, RECITS Laboratory P. Box $32$,\\ El-Alia, $16111$ Bab-Ezzouar, Algiers, Algeria.\\
              \email{bhadj@usthb.dz}
}

\date{Received: date / Accepted: date}

\maketitle

\begin{abstract}
The object of this paper is to introduce and study properties of unified Apostol-Bernoulli and Apostol-Euler polynomials noted by $\left\{\mathfrak{V_{n}}(x;\lambda;\mu)\right\}_{n \geq 0}$. We study some arithmetic properties of $\left\{\mathfrak{V_{n}}(x;\lambda;\mu)\right\}_{n \geq 0}$ as their connection to Apostol-Euler polynomials and Apostol-Bernoulli polynomials. Also, we give derivation and integration representations of $\left\{\mathfrak{V_{n}}(x;\lambda;\mu)\right\}_{n \geq 0}$. Finally, we use the umbral calculus approach to deduce symmetric identities.
\keywords{Euler polynomials \and Bernoulli polynomials \and Apostol-Bernoulli and Apostol-Euler polynomials\and generating function.}
 \subclass{11B68\and 11B83\and 11C08\and 11C20.}
\end{abstract}

\section{Introduction}
\label{intro}
The Bernoulli $\{B_{n}(x)\}_{n \geq 0}$ and the Euler $\{E_{n}(x)\}_{n \geq 0}$ polynomials respectively are generated by the following power series (see \cite{Bernoulli-1713, Euler-1738}):
\begin{equation*}
\dfrac{t}{e^{t}-1}e^{xt} =  \sum_{n = 0}^{\infty}B_{n}(x)\dfrac{t^{n}}{n!}  \qquad (\vert t\vert < 2\pi)
\end{equation*}
and
\begin{equation*}
\dfrac{2}{e^{t} + 1}e^{xt} =  \sum_{n = 0}^{\infty}E_{n}(x)\dfrac{t^{n}}{n!}  \qquad (\vert t\vert < \pi).
\end{equation*}
As a particular case, for $x=0$, we denote $B_{n} := B_{n}(0)$ and $E_{n} := E_{n}(0)$, which are called the Bernoulli and the Euler numbers respectively. They have numerous important applications in various fields of mathematics, as number theory, analysis and combinatorics.

\par Apostol \cite{Apostol1951} introduced and investigated the extended form of the classical Bernoulli polynomials and numbers, which are known as the Apostol-Bernoulli polynomials and numbers. The Apostol-Euler and the Apostol-Genocchi polynomials were introduced by Srivastava \cite{Srivastava2011}. Belbachir et \emph{al}. \cite{BH-2019, BHR-2019} proposed a new family of polynomials called Euler-Genocchi polynomials and studied their properties like linear recurrences and difference equations using a determinantal approach and generating function.
\section{Determinantal representation of the Bernoulli-Euler polynomials of Apostol type}
According to \cite{Luo-Srivastava}, the Apostol-Bernoulli polynomials $\{\mathfrak{B}_{n}(x;\lambda)\}_{n \geq 0}$ and the Apostol-Euler polynomials $\{\mathfrak{E}_{n}(x;\lambda)\}_{n \geq 0}$ are generated by the following power series:
\begin{equation}\label{Apostol-Bernoulli}
\dfrac{t}{\lambda e^{t}-1}e^{xt} =  \sum_{n = 0}^{\infty}\mathfrak{B}_{n}(x;\lambda)\dfrac{t^{n}}{n!} \qquad \left(\left|t+ \ln  \lambda\right|<2\pi, \ \lambda \in \mathbb{R}_{+}^{*}\right)
\end{equation}
and
\begin{equation}\label{Apostol-Euler}
\dfrac{2}{\lambda e^{t}+1}e^{xt} =  \sum_{n = 0}^{\infty}\mathfrak{E}_{n}(x;\lambda)\dfrac{t^{n}}{n!} \qquad \left(\left|t+ \ln \lambda\right|<\pi, \ \lambda \in \mathbb{R}_{+}^{*}\right),
\end{equation}
\noindent The Apostol-Bernoulli numbers $\mathfrak{B_{n}}(\lambda)$ and the Apostol-Euler numbers $\mathfrak{E_{n}}(\lambda)$ are given by
$\mathfrak{B_{n}}(\lambda) = \mathfrak{B}_{n}(0;\lambda)$ and $\mathfrak{E_{n}}(\lambda) = \mathfrak{E_{n}}(0;\lambda)$.

\vspace{0.3cm}
\noindent Letting
\begin{equation*}
T(x,\lambda,t)=\dfrac{2}{\lambda e^{t}+1}e^{xt}\times \dfrac{t}{\lambda e^{t}-1}e^{xt} = \dfrac{2t}{\lambda^{2} e^{2t}-1}e^{2xt}.
\end{equation*}
Taking into account the right hand side of \eqref{Apostol-Bernoulli} and \eqref{Apostol-Euler}, a direct computation gives
\begin{small}
\begin{equation*}
\lambda^{2} T(x+1,\lambda,t)-T(x,\lambda,t) = \sum_{n = 0}^{\infty}\left\{\sum_{k = 0}^{n}\binom{n}{k}\left[\lambda^{2}\mathfrak{B}_{n-k}(x,\lambda) \mathfrak{E}_{k}(x,\lambda) - \mathfrak{B}_{n-k}(x,\lambda) \mathfrak{E}_{k}(x,\lambda)\right]\right\}\dfrac{t^{n}}{n!}.
\end{equation*}
\end{small}

\noindent On the other hand, we have
\begin{small}
\begin{equation*}
\lambda ^{2} T(x+1,\lambda,t)-T(x,\lambda,t)  = \lambda ^{2} \dfrac{2t}{\lambda ^{2} e^{2t}-1}e^{2(x+1)t} - \dfrac{2t}{\lambda ^{2} e^{2t}-1}e^{2xt} = 2t e^{2xt} =  \sum_{n=0}^{+\infty}n2^{n}x^{n-1}\dfrac{t^{n}}{n!}.
\end{equation*}
\end{small}

\noindent Comparing the two expansions of $\lambda^{2} T(x+1,\lambda,t)-T(x,\lambda,t)$, we formulate the next result.
\begin{theorem}\label{determinantal-representation-Apostol-EB} Let ${x}$ be a real number and $n$ an integer. Then
\begin{small}
\begin{equation}\label{Apostol-monome-gen-EB-polnomials}
 x^{n} = \sum_{k = 0}^{n+1}\Lambda_{n,k} \times \Delta_{n+1-k,k}(x,\lambda),
\end{equation}
\end{small}
where \ $\Lambda_{n,k} = \dfrac{1}{2^{n + 1}(n + 1)}\binom{n+1}{k}$ \ and \ $\Delta_{n,k}(x,\lambda) = \left| \begin{array}{cc}
\lambda \mathfrak{B}_{n}(x+1,\lambda) & \hspace{-0.7cm} \mathfrak{E}_{k}(x,\lambda) \\
\hspace{-0.7cm} \mathfrak{B}_{n}(x,\lambda)     &  \lambda \mathfrak{E}_{k}(x+1,\lambda)
\end{array} \right|$.

\end{theorem}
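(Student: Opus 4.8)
The plan is to obtain \eqref{Apostol-monome-gen-EB-polnomials} by comparing, term by term in $t$, the two expansions of $\lambda^{2}T(x+1,\lambda,t)-T(x,\lambda,t)$ that are already displayed just before the statement. Both sides are represented by power series in $t$ convergent in a neighbourhood of the origin, so equating the coefficients of $t^{n}/n!$ is legitimate.

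First I would pin down the ``combinatorial'' side. Writing $T(y,\lambda,t)$ as the Cauchy product of the series \eqref{Apostol-Bernoulli} and \eqref{Apostol-Euler} evaluated at $y$, the coefficient of $t^{n}/n!$ in $\lambda^{2}T(x+1,\lambda,t)-T(x,\lambda,t)$ is
\[
\sum_{k=0}^{n}\binom{n}{k}\Bigl[\lambda^{2}\mathfrak{B}_{n-k}(x+1,\lambda)\mathfrak{E}_{k}(x+1,\lambda)-\mathfrak{B}_{n-k}(x,\lambda)\mathfrak{E}_{k}(x,\lambda)\Bigr],
\]
and expanding the $2\times 2$ determinant in the definition of $\Delta_{m,k}(x,\lambda)$ shows the bracket equals $\Delta_{n-k,k}(x,\lambda)$; hence this coefficient is $\sum_{k=0}^{n}\binom{n}{k}\Delta_{n-k,k}(x,\lambda)$. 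On the ``analytic'' side, the already recorded identity $\lambda^{2}T(x+1,\lambda,t)-T(x,\lambda,t)=2t\,e^{2xt}=\sum_{n\ge 1}n2^{n}x^{n-1}\,t^{n}/n!$ shows the coefficient of $t^{n}/n!$ is $n2^{n}x^{n-1}$ (and $0$ for $n=0$).

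Equating the two gives $n2^{n}x^{n-1}=\sum_{k=0}^{n}\binom{n}{k}\Delta_{n-k,k}(x,\lambda)$ for all $n\ge 0$. The last step is to substitute $n\mapsto n+1$ and divide by $2^{n+1}(n+1)$; since $\Lambda_{n,k}=\tfrac{1}{2^{n+1}(n+1)}\binom{n+1}{k}$, the result is exactly $x^{n}=\sum_{k=0}^{n+1}\Lambda_{n,k}\,\Delta_{n+1-k,k}(x,\lambda)$, which is \eqref{Apostol-monome-gen-EB-polnomials}. I do not expect a genuine obstacle: the argument is a coefficient comparison, and the only items deserving a line of care are the index bookkeeping in the Cauchy product (so that the inner sum really yields $\Delta_{n-k,k}$ with the arguments aligned as in the determinant) and the remark that the $n=0$ instance of the equated identity is the trivial $0=0$, the nontrivial content appearing only after the shift of index.
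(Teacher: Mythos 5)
Your proposal is correct and follows essentially the same route as the paper: expand $\lambda^{2}T(x+1,\lambda,t)-T(x,\lambda,t)$ once as a Cauchy product of \eqref{Apostol-Bernoulli} and \eqref{Apostol-Euler} and once in closed form as $2te^{2xt}$, equate coefficients of $t^{n}/n!$, then shift $n\mapsto n+1$ and divide by $2^{n+1}(n+1)$. Incidentally, your bracket $\lambda^{2}\mathfrak{B}_{n-k}(x+1,\lambda)\mathfrak{E}_{k}(x+1,\lambda)-\mathfrak{B}_{n-k}(x,\lambda)\mathfrak{E}_{k}(x,\lambda)$ is the correct one (it is exactly $\Delta_{n-k,k}(x,\lambda)$), silently fixing the typo in the paper's displayed expansion where both products carry the argument $x$.
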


In particular, taking $\lambda = 1$ in \eqref{Apostol-monome-gen-EB-polnomials}, we get the following result in terms of the Bernoulli and the Euler polynomials.
\begin{Cor}\emph{\cite{BHR-2019}}
Let ${x}$ be a real number and an integer $n\geq 0$, we have
\begin{equation}
\hspace{-1.8cm} x^{n} = \dfrac{1}{2^{n + 1}(n + 1)}\sum_{k = 0}^{n+1}\binom{n+1}{k}\left| \begin{array}{cc}
B_{n-(k-1)}(x + 1) & \hspace{-0.7cm} E_{k}(x) \\
\hspace{-0.7cm} B_{n-(k-1)}(x)     &  E_{k}(x + 1)
\end{array} \right|.
 \end{equation}
\end{Cor}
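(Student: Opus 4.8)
The plan is to derive this corollary as the direct specialization $\lambda = 1$ of Theorem~\ref{determinantal-representation-Apostol-EB}. First I would note that putting $\lambda = 1$ in the defining generating functions \eqref{Apostol-Bernoulli} and \eqref{Apostol-Euler} reduces them exactly to the classical generating functions for the Bernoulli and Euler polynomials recalled in Section~\ref{intro}; the convergence constraints $|t+\ln\lambda|<2\pi$ and $|t+\ln\lambda|<\pi$ become the classical $|t|<2\pi$ and $|t|<\pi$. Comparing the coefficients of $t^{n}/n!$ on both sides then yields $\mathfrak{B}_{n}(x;1) = B_{n}(x)$ and $\mathfrak{E}_{n}(x;1) = E_{n}(x)$ for every integer $n \geq 0$.

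Next I would substitute $\lambda = 1$ into identity \eqref{Apostol-monome-gen-EB-polnomials}. The scalar $\Lambda_{n,k} = \frac{1}{2^{n+1}(n+1)}\binom{n+1}{k}$ does not involve $\lambda$, so it is unchanged, and pulling the common factor $\frac{1}{2^{n+1}(n+1)}$ out of the sum produces the prefactor displayed in the corollary. In the determinant $\Delta_{n+1-k,k}(x,\lambda)$ the entries $\lambda\mathfrak{B}_{n+1-k}(x+1,\lambda)$ and $\lambda\mathfrak{E}_{k}(x+1,\lambda)$ drop their factor $\lambda$, while by the previous step all four entries become classical polynomials, so
\[
\Delta_{n+1-k,k}(x,1) = \left|\begin{array}{cc} B_{n+1-k}(x+1) & E_{k}(x) \\ B_{n+1-k}(x) & E_{k}(x+1)\end{array}\right|.
\]
Finally, rewriting $n+1-k = n-(k-1)$ in the first column matches verbatim the form in the statement, which completes the argument.

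There is essentially no real obstacle here: the corollary is a routine specialization of the already-established theorem, and the only point deserving an explicit line is the identification at $\lambda = 1$ of the Apostol--Bernoulli and Apostol--Euler polynomials with their classical counterparts, obtained by coefficient comparison of the generating series.
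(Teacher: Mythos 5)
Your proposal is correct and coincides with the paper's own (very brief) justification: the corollary is obtained simply by setting $\lambda = 1$ in Theorem~\ref{determinantal-representation-Apostol-EB}, identifying $\mathfrak{B}_{n}(x;1)=B_{n}(x)$ and $\mathfrak{E}_{n}(x;1)=E_{n}(x)$ via the generating functions, and rewriting the index $n+1-k$ as $n-(k-1)$. Your explicit remark about the $\lambda$-factors disappearing from the determinant entries is exactly the routine check the paper leaves implicit.
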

\section{Unified Bernoulli-Euler polynomials of Apostol type}

In this section, we give a definition of the unified Bernoulli-Euler polynomials of Apostol type and study their properties using power series.
\begin{defin}
Let $\lambda \in \mathbb{R}_{+}^{*}$ and $\mu \in \mathbb{R}_{+}-\{1\}$, we define the unified Bernoulli-Euler polynomials of Apostol type $\mathfrak{V_{n}}(x;\lambda;\mu)$ by the following power series:
\begin{equation}\label{EGP1}
\dfrac{2-\mu + \frac{\mu}{2}t}{\lambda e^{t}+(1-\mu)}e^{xt} =  \sum_{n \geq 0}\mathfrak{V_{n}}(x;\lambda;\mu)\dfrac{t^{n}}{n!},
\end{equation}
where
\begin{equation*}
\left\{
                       \begin{array}{ll}
                         \left|\ln\left(\frac{\lambda}{1-\mu}\right) + t \right|<2\pi, & \hbox{for } 0 \leq \mu < 1;\\
{}&{}\\
                         \left|\ln\left(\frac{\lambda}{\mu-1}\right) + t \right|<\pi, & \hbox{otherwise}.
                       \end{array}
                     \right.
\end{equation*}
Furthermore, the unified Bernoulli-Euler numbers of Apostol type, denoted $\mathfrak{V_{n}}(\lambda;\mu)$, are given by
\begin{equation}\label{EGP-1}
\mathfrak{V_{n}}(\lambda;\mu) := \mathfrak{V_{n}}(0;\lambda;\mu).
\end{equation}
\end{defin}

\noindent We summarize in the following table some special polynomials related to this extension.\\
\vspace{0.025cm}

\begin{scriptsize}
\begin{tabular}{|l|l|l|l|l|l|l|l|}
  \hline
  \textbf{Parameters} & \textbf{Generating functions} & \textbf{Polynomials} \\ \hline {}& {}&{}\\
   $\mu=0, \lambda=1$ &  $\displaystyle \dfrac{2}{e^{t} + 1}e^{xt} =  \sum_{n \geq 0}\limits E_{n}(x)\dfrac{t^{n}}{n!}, \qquad \left|t\right|<\pi$ &  Euler polynomials   \\
   {}&{}&{}\\
   \hline {}& {}&{}\\
   $\mu=2, \lambda=1$  &  $\displaystyle \dfrac{t}{e^{t} - 1}e^{xt} =  \sum_{n \geq 0}\limits B_{n}(x)\dfrac{t^{n}}{n!}, \qquad \left|t\right|<2\pi$ &   Bernoulli polynomials  \\
   {}&{}&{}\\
   \hline {}& {}&{}\\
$\mu=2$&$\displaystyle \dfrac{t}{\lambda e^{t}-1}e^{xt} =  \sum\limits_{n \geq 0}\mathfrak{B}_{n}(x;\lambda)\dfrac{t^{n}}{n!}, \qquad \left|t+ \ln  \lambda\right|<2\pi$ & Apostol-Bernoulli polynomials
   \\{}&{}& {}\\
  \hline {}& {}&{}\\
$\mu=0$ & $\displaystyle \dfrac{2}{\lambda e^{t}+1}e^{xt} =  \sum\limits_{n \geq 0}\mathfrak{E}_{n}(x;\lambda)\dfrac{t^{n}}{n!}, \qquad \left|t+ \ln \lambda\right|<\pi$
  & Apostol-Euler polynomials \\{}&{}& {}\\
  \hline
\end{tabular}
\end{scriptsize}

\vspace{1cm}
\noindent We list some properties of the unified Bernoulli-Euler polynomials of Apostol type using generating function approach.

\begin{theorem}\label{TH2}
Let $n$ be nonnegative integer, we have
\begin{equation}\label{EGP4}
\hspace{-2cm} \mathfrak{V}_{n}(x+y;\lambda;\mu) = \sum_{k = 0}^{n}\binom{n}{k}\mathfrak{V}_{k}(x;\lambda;\mu)y^{n-k}.
\end{equation}
\end{theorem}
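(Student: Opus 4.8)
The plan is to exploit the factorization of the generating function. Write $G(x,t) := \dfrac{2-\mu+\frac{\mu}{2}t}{\lambda e^{t}+(1-\mu)}\,e^{xt}$, so that by definition $G(x,t) = \sum_{n\geq 0}\mathfrak{V}_{n}(x;\lambda;\mu)\dfrac{t^{n}}{n!}$. The key observation is the trivial identity $G(x+y,t) = G(x,t)\cdot e^{yt}$, which holds because the only place $x$ enters is through the factor $e^{xt}$, and $e^{(x+y)t} = e^{xt}e^{yt}$.

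From here the argument is a Cauchy product of two power series. First I would expand the left-hand side directly: $G(x+y,t) = \sum_{n\geq 0}\mathfrak{V}_{n}(x+y;\lambda;\mu)\dfrac{t^{n}}{n!}$. Next I would expand the right-hand side: $G(x,t)\,e^{yt} = \Bigl(\sum_{k\geq 0}\mathfrak{V}_{k}(x;\lambda;\mu)\dfrac{t^{k}}{k!}\Bigr)\Bigl(\sum_{j\geq 0} y^{j}\dfrac{t^{j}}{j!}\Bigr)$. Collecting the coefficient of $\dfrac{t^{n}}{n!}$ in the product via the standard Cauchy-product rule for exponential generating functions gives $\sum_{k=0}^{n}\binom{n}{k}\mathfrak{V}_{k}(x;\lambda;\mu)\,y^{n-k}$. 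Equating the coefficients of $\dfrac{t^{n}}{n!}$ on both sides — legitimate since both series converge on a common punctured disc determined by the stated constraints on $t$, $\lambda$, $\mu$ — yields \eqref{EGP4}.

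There is essentially no obstacle here: the identity is a formal consequence of the multiplicative structure of $e^{xt}$ in the generating function, exactly as for the classical Bernoulli and Euler polynomials. The only point requiring a word of care is the interchange of summation in the Cauchy product and the validity of comparing coefficients, which is justified by absolute convergence of both factors inside the region of convergence specified in the definition; for a purely formal-power-series reading, even that remark is unnecessary.
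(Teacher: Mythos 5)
Your proposal is correct and follows essentially the same route as the paper: factor $e^{(x+y)t}=e^{xt}e^{yt}$ in the generating function, take the Cauchy product of the series for $\mathfrak{V}_{k}(x;\lambda;\mu)$ with $e^{yt}$, and compare coefficients of $t^{n}/n!$. Your added remark on convergence (or the formal-power-series reading) is a harmless refinement of the same argument.
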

\noindent In particular, for $x :=0$ and $y:=x$, the above relation becomes
\begin{equation}\label{expilcite-formula-representation}
\mathfrak{V}_{n}(x;\lambda;\mu) = \sum_{k = 0}^{n}\binom{n}{k}\mathfrak{V}_{k}(\lambda;\mu)x^{n-k}.
\end{equation}

\begin{proof}
We establish the  power series defined in \eqref{EGP1} for $\mathfrak{V}_{n}(x+y;\lambda;\mu)$, we have
\begin{equation*}
\sum_{n \geq 0}\mathfrak{V}_{n}(x+y;\lambda;\mu)\dfrac{t^{n}}{n!} = \left(\dfrac{2-\mu + \frac{\mu}{2}t}{\lambda e^{t}+(1-\mu)}\right)e^{(x+y)t} = \sum_{n \geq 0}\sum_{k \geq 0}\mathfrak{V}_{n}(x;\lambda;\mu){y}^{k}\dfrac{t^{n+k}}{n!k!}.
\end{equation*}
Applying the product series and then comparing the coefficients of  $t^{n}$ on both sides, we obtain Identity \eqref{EGP4}.
\end{proof}

\begin{rem}
Expression \eqref{EGP4} allows us to obtain $\mathfrak{V}_{n}(\lambda;\mu)$ the unified Bernoulli-Euler numbers of Apostol type in terms of the unified Bernoulli-Euler polynomials of Apostol type. Indeed, it suffices to replace $y$ by $-x$ in Formula \eqref{EGP4}, we get the following expression:
\begin{equation*}
\mathfrak{V}_{n}(\lambda;\mu) = \sum_{k = 0}^{n}\binom{n}{k}(-1)^{n-k}\mathfrak{V}_{k}(x;\lambda;\mu)x^{n-k}.
\end{equation*}
\end{rem}
As a first consequence of Theorem \ref{determinantal-representation-Apostol-EB}, we show that the unified Bernoulli-Euler polynomials of Apostol type, $\{\mathfrak{V}_{n}(x,\lambda;\mu)\}_{n \geq 0}$ given by the power series in \eqref{EGP1}, can be expressed in terms of the Apostol-Bernoulli and the Apostol-Euler polynomials. That is, by a straightforward calculation, the substitution of $x^{n}$ given by \eqref{Apostol-monome-gen-EB-polnomials} in Expression \eqref{expilcite-formula-representation} allows us to obtain the following formula:
\begin{prop}Let $n, k$ and $j$ be three integers, it holds that
\begin{equation*}
\mathfrak{V}_{n}(x;\lambda;\mu) = \sum_{k = 0}^{n}\binom{n}{k}\mathfrak{V}_{n-k}(\lambda;\mu)\sum_{j = 0}^{k+1}\Lambda_{k,j} \times \Delta_{k+1-j,j}(x,\lambda).
\end{equation*}
\end{prop}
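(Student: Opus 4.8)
The plan is to combine the explicit-formula representation from Theorem~\ref{TH2} with the monomial expansion supplied by Theorem~\ref{determinantal-representation-Apostol-EB}. Concretely, I would start from Identity~\eqref{expilcite-formula-representation},
\begin{equation*}
\mathfrak{V}_{n}(x;\lambda;\mu) = \sum_{k = 0}^{n}\binom{n}{k}\mathfrak{V}_{k}(\lambda;\mu)x^{n-k},
\end{equation*}
which expresses the unified polynomial as a $\mathfrak{V}$-number-weighted combination of the monomials $x^{n-k}$. The idea is simply to feed each monomial appearing on the right-hand side into the determinantal identity \eqref{Apostol-monome-gen-EB-polnomials}.

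First I would reindex: it is cleaner to write \eqref{expilcite-formula-representation} with the binomial coefficient carrying the number and the monomial carrying the complementary index, i.e. replace $k$ by $n-k$ to get $\mathfrak{V}_{n}(x;\lambda;\mu) = \sum_{k=0}^{n}\binom{n}{k}\mathfrak{V}_{n-k}(\lambda;\mu)\,x^{k}$, so that the exponent of $x$ runs as $k$ from $0$ to $n$. Then for each $k$ I would invoke \eqref{Apostol-monome-gen-EB-polnomials} in the form $x^{k} = \sum_{j=0}^{k+1}\Lambda_{k,j}\,\Delta_{k+1-j,j}(x,\lambda)$. Substituting this into the reindexed sum and keeping the order of summation ($k$ outermost, $j$ innermost) yields exactly
\begin{equation*}
\mathfrak{V}_{n}(x;\lambda;\mu) = \sum_{k = 0}^{n}\binom{n}{k}\mathfrak{V}_{n-k}(\lambda;\mu)\sum_{j = 0}^{k+1}\Lambda_{k,j} \times \Delta_{k+1-j,j}(x,\lambda),
\end{equation*}
which is the claimed formula.

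There is essentially no analytic obstacle here: both input identities are already proved in the excerpt, the sums are finite, and no interchange of infinite series or convergence argument is needed. The only thing that requires care — and the single point I would slow down on — is the bookkeeping of indices: making sure that when Theorem~\ref{determinantal-representation-Apostol-EB} is applied at exponent $k$, the inner index correctly runs to $k+1$ (not $n+1$), and that $\Lambda_{k,j}=\frac{1}{2^{k+1}(k+1)}\binom{k+1}{j}$ and $\Delta_{k+1-j,j}(x,\lambda)$ are instantiated with the right subscripts. A secondary cosmetic check is that applying the identity at $k=0$ is legitimate, since Theorem~\ref{determinantal-representation-Apostol-EB} is stated for integers $n$ and the corollary confirms validity for $n\geq 0$; the term $k=0$ just contributes the constant $\mathfrak{V}_{n}(\lambda;\mu)$ via the trivial expansion of $x^{0}$. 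Once the index alignment is verified, the proof is a one-line substitution, which matches the ``straightforward calculation'' phrasing already used in the text preceding the proposition.
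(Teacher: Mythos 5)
Your proposal is correct and follows exactly the route the paper indicates: substituting the monomial expansion of Theorem~\ref{determinantal-representation-Apostol-EB} (applied with exponent $k$) into Identity \eqref{expilcite-formula-representation}, after the harmless reindexing $k\mapsto n-k$ that uses the symmetry of the binomial coefficient. The paper gives no further detail beyond calling this a straightforward calculation, and your index bookkeeping ($\Lambda_{k,j}$, inner sum up to $k+1$) is exactly what that calculation requires.
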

\section{Generalized Raabe Theorem}
In this section, we give an extension of Raab Theorem for the unified Bernoulli-Euler polynomials of Apostol type.
\begin{theorem}\label{thm : Appell-polyn Euler-Genocchi type 2}
Let $r$ and $m$ be nonnegative integers with $m$ odd, for $\lambda = 1- \mu$ and $\mu \not = 1$, we have
\begin{equation*}
\sum\limits_{k=0}^{m-1}(-1)^{k}\mathfrak{V}_{n}\left(\frac{x+k}{m};1- \mu;\mu\right) =\frac{1-m}{m^{n}}\left(\dfrac{\mu-2}{2(\mu-1)}\right)E_{n}(x) + \frac{1}{m^{n-1}}\mathfrak{V}_{n}(x;1-\mu;\mu).
\end{equation*}
\end{theorem}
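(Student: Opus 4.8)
The plan is to argue entirely at the level of generating functions. The crucial simplification afforded by the hypothesis $\lambda = 1-\mu$ (with $\mu\neq 1$) is that the denominator in \eqref{EGP1} factors, $\lambda e^{t}+(1-\mu)=(1-\mu)(e^{t}+1)$, so that, writing
\[
G(y,t):=\frac{(2-\mu)+\tfrac{\mu}{2}t}{(1-\mu)(e^{t}+1)}\,e^{yt}=\sum_{n\geq 0}\mathfrak{V}_{n}(y;1-\mu;\mu)\frac{t^{n}}{n!},
\]
the kernel becomes a rational multiple of the Apostol--Euler kernel at $\lambda=1$. First I would form the exponential generating function of the left-hand side of the asserted identity, namely $\sum_{k=0}^{m-1}(-1)^{k}G\!\left(\tfrac{x+k}{m},t\right)$, pull the $k$-independent factor out front, and evaluate the finite geometric sum
\[
\sum_{k=0}^{m-1}\bigl(-e^{t/m}\bigr)^{k}=\frac{1-(-1)^{m}e^{t}}{1+e^{t/m}}=\frac{1+e^{t}}{1+e^{t/m}},
\]
the last equality being where the oddness of $m$ is used. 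The factor $e^{t}+1$ then cancels, leaving
\[
\sum_{n\geq 0}\left(\sum_{k=0}^{m-1}(-1)^{k}\mathfrak{V}_{n}\!\left(\tfrac{x+k}{m};1-\mu;\mu\right)\right)\frac{t^{n}}{n!}=\frac{(2-\mu)+\tfrac{\mu}{2}t}{(1-\mu)\bigl(1+e^{t/m}\bigr)}\,e^{xt/m}.
\]

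Next I would split this right-hand side into the two contributions predicted by the theorem. On one hand, $m\,G(x,t/m)=\dfrac{m(2-\mu)+\tfrac{\mu}{2}t}{(1-\mu)(1+e^{t/m})}e^{xt/m}$, whose coefficient of $t^{n}/n!$ is $m\cdot m^{-n}\mathfrak{V}_{n}(x;1-\mu;\mu)=m^{1-n}\mathfrak{V}_{n}(x;1-\mu;\mu)$. Subtracting this off annihilates the $\tfrac{\mu}{2}t$ terms and leaves $\dfrac{(2-\mu)(1-m)}{1-\mu}\cdot\dfrac{e^{xt/m}}{1+e^{t/m}}$. Invoking \eqref{Apostol-Euler} with $\lambda=1$, i.e.\ $\dfrac{2e^{xw}}{e^{w}+1}=\sum_{n\geq 0}E_{n}(x)\dfrac{w^{n}}{n!}$ with $w=t/m$, gives $\dfrac{e^{xt/m}}{1+e^{t/m}}=\tfrac12\sum_{n\geq 0}m^{-n}E_{n}(x)\dfrac{t^{n}}{n!}$. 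Reading off the coefficient of $t^{n}/n!$ on both sides and using $\dfrac{2-\mu}{2(1-\mu)}=\dfrac{\mu-2}{2(\mu-1)}$ produces exactly $\dfrac{1-m}{m^{n}}\Bigl(\dfrac{\mu-2}{2(\mu-1)}\Bigr)E_{n}(x)+\dfrac{1}{m^{n-1}}\mathfrak{V}_{n}(x;1-\mu;\mu)$, as claimed.

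No genuinely deep step is involved; the proof is bookkeeping with power series. The one point that truly needs care is the parity hypothesis: the geometric sum telescopes against $1+e^{t}$ only when $(-1)^{m}=-1$, and without this the denominator $e^{t}+1$ fails to cancel, so "$m$ odd" is essential and not cosmetic. A secondary point is to keep the powers of $m$ straight under the substitution $w=t/m$, which is what yields the normalizations $m^{1-n}$ and $m^{-n}$. (One also observes that the integer $r$ in the hypothesis does not enter the statement or the argument.) An equivalent route, should one prefer to avoid the geometric-sum manipulation, is to first derive from \eqref{EGP1} the decomposition $\mathfrak{V}_{n}(y;1-\mu;\mu)=\tfrac{2-\mu}{2(1-\mu)}E_{n}(y)+\tfrac{\mu n}{4(1-\mu)}E_{n-1}(y)$ and then apply the classical Raabe-type identity $\sum_{k=0}^{m-1}(-1)^{k}E_{n}\!\left(\tfrac{x+k}{m}\right)=m^{-n}E_{n}(x)$ ($m$ odd) term by term; the $E_{n-1}$ contributions cancel against $m^{1-n}\mathfrak{V}_{n}(x;1-\mu;\mu)$ and only the $E_{n}$ remainder survives.
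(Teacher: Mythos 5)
Your proposal is correct and follows essentially the same route as the paper: form the exponential generating function of the alternating sum, evaluate the geometric sum $\sum_{k=0}^{m-1}(-e^{t/m})^{k}$ where the oddness of $m$ lets $e^{t}+1$ cancel, and then split the resulting kernel into $m\,G(x,t/m)$ plus a pure Euler term read off via \eqref{Apostol-Euler} at $\lambda=1$ with $w=t/m$. The paper's decomposition $\dfrac{1-m}{2}\dfrac{2-\mu}{1-\mu}\dfrac{2}{e^{t/m}+1}e^{xt/m}+\dfrac{m}{1-\mu}\dfrac{2-\mu+\frac{\mu t}{2m}}{e^{t/m}+1}e^{xt/m}$ is exactly your subtraction step written out, so the two arguments coincide.
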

\begin{proof}\noindent  It follows from \eqref{EGP1} that
\begin{align*}
 \sum_{n=0}^{\infty} \sum\limits_{k=0}^{m-1}(-1)^{k}&\mathfrak{V}_{n}\left(\frac{x+k}{m};1- \mu;\mu\right) \frac{t^{n}}{n!}\\
                                               & = \sum_{k=0}^{m-1}(-1)^{k}\sum_{n=0}^{\infty}\mathfrak{V}_{n}\left(\frac{x+k}{m};1- \mu;\mu\right) \frac{t^{n}}{n!}\\
                                              & = \sum_{k=0}^{m-1}(-1)^{k}\dfrac{2-\mu+\frac{\mu}{2}t}{(1-\mu)e^{t}+(1-\mu)}e^{(x+k)t/m}\\
                                              & = \dfrac{2-\mu+\frac{\mu}{2}t}{(1-\mu)(e^{t}+1)}e^{xt/m}\dfrac{\left[1-(-e^{t/m})^{m}\right]}{1+e^{t/m}}\\
                                              & = \dfrac{1}{(1-\mu)}\left(\dfrac{2-\mu+\frac{\mu}{2}t}{e^{t/m}+1}\right)e^{xt/m}\\
                                              & = \dfrac{1-m}{2}\left(\dfrac{2-\mu}{1-\mu}\right)\left(\dfrac{2}{e^{t/m}+1}\right)e^{xt/m}  + \dfrac{m}{1-\mu}\left(\dfrac{2-\mu+\frac{\mu t}{2m}}{e^{t/m}+1}\right)e^{xt/m}\\
                                              & = \dfrac{1-m}{2}\left(\dfrac{2-\mu}{1-\mu}\right)\sum_{n=0}^{\infty}E_{n}(x)\dfrac{\left(t/m\right)^{n}}{n!} + \sum_{n=0}^{\infty}\frac{1}{m^{n-1}}\mathfrak{V}_{n}(x;1-\mu;\mu)\dfrac{t^{n}}{n!}.
\end{align*}
By simple manipulations and equating the coefficients of $t^{n}$ on both sides, we get the result.
\end{proof}
As a consequence of Theorem \ref{thm : Appell-polyn Euler-Genocchi type 2}, for $\mu = 2$ and $\mu = 0$ respectively, we have a multiplication Theorem for Euler and Bernoulli polynomials proved by Raabe in \cite{Raabe-1851}, as specified by Kargin and Kurt \cite{Kargin-Kurt-2011}. They are given as follows:
\begin{equation*}
\hspace{-0.7cm}\sum\limits_{k=0}^{m-1}B_{n}\left( \frac{x+k}{m}\right) =\dfrac{1}{m^{n-1}}B_{n}(x)
\end{equation*}
and
\begin{equation*}
\sum\limits_{k=0}^{m-1}(-1)^{k}E_{n}\left( \frac{x+k}{m} \right) =\frac{1}{m^{n}}E_{n}(x).
\end{equation*}
\section{Some explicit formulas}
In this section, we give some explicit formulas of the unified Bernoulli-Euler polynomials of Apostol type.
\begin{theorem}\label{Apostol-EB-polnomials}
For $\lambda \in \mathbb{R}_{+}^{*}$ and $\mu \in \mathbb{R}_{+} - \{1\}$, it holds that
\begin{small}
\begin{equation}\label{EGP18}
\mathfrak{V}_{n}(x;\lambda;\mu) = \frac{1}{2(\mu-1)}\left[(\mu-2)\mathfrak{E}_{n}\left(x;\frac{\lambda}{1-\mu}\right) -\frac{\mu n}{2}\mathfrak{E}_{n-1}\left(x;\frac{\lambda}{1-\mu}\right)\right]\quad (n \in \mathbb{N}).
\end{equation}
\end{small}
\end{theorem}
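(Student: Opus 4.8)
The plan is to manipulate the generating function \eqref{EGP1} directly and recognize an Apostol--Euler generating function inside it. The starting observation is that the denominator factors as
\[
\lambda e^{t}+(1-\mu)=(1-\mu)\left(\frac{\lambda}{1-\mu}\,e^{t}+1\right),
\]
which is legitimate since $\mu\neq 1$. Writing $\nu:=\dfrac{\lambda}{1-\mu}$, the left-hand side of \eqref{EGP1} becomes
\[
\frac{2-\mu+\frac{\mu}{2}t}{(1-\mu)\bigl(\nu e^{t}+1\bigr)}\,e^{xt}
=\frac{1}{2(1-\mu)}\left(2-\mu+\frac{\mu}{2}t\right)\cdot\frac{2}{\nu e^{t}+1}\,e^{xt},
\]
and by \eqref{Apostol-Euler} the last factor is exactly $\displaystyle\sum_{n\geq 0}\mathfrak{E}_{n}(x;\nu)\,\frac{t^{n}}{n!}$ (one should check here that the convergence region prescribed for $\mathfrak{V}_n$ in the definition coincides with $|t+\ln\nu|<\pi$ when $\mu>1$, and lies inside it when $0\le\mu<1$, so the substitution is valid).

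Next I would multiply the polynomial $2-\mu+\frac{\mu}{2}t$ into the series term by term, splitting into two sums:
\[
\sum_{n\geq 0}\mathfrak{V}_{n}(x;\lambda;\mu)\frac{t^{n}}{n!}
=\frac{2-\mu}{2(1-\mu)}\sum_{n\geq 0}\mathfrak{E}_{n}(x;\nu)\frac{t^{n}}{n!}
+\frac{\mu}{4(1-\mu)}\sum_{n\geq 0}\mathfrak{E}_{n}(x;\nu)\frac{t^{n+1}}{n!}.
\]
In the second sum I reindex $n\mapsto n-1$, using $\dfrac{t^{n}}{(n-1)!}=n\,\dfrac{t^{n}}{n!}$, so that it becomes $\displaystyle\sum_{n\geq 1} n\,\mathfrak{E}_{n-1}(x;\nu)\frac{t^{n}}{n!}$; the $n=0$ term is absent, consistent with the convention that the $\mathfrak{E}_{n-1}$ contribution vanishes at $n=0$.

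Finally, equating the coefficients of $\dfrac{t^{n}}{n!}$ on both sides gives
\[
\mathfrak{V}_{n}(x;\lambda;\mu)=\frac{2-\mu}{2(1-\mu)}\,\mathfrak{E}_{n}(x;\nu)+\frac{\mu\,n}{4(1-\mu)}\,\mathfrak{E}_{n-1}(x;\nu),
\]
and rewriting the constants via $\dfrac{2-\mu}{2(1-\mu)}=\dfrac{\mu-2}{2(\mu-1)}$ and $\dfrac{\mu n}{4(1-\mu)}=-\dfrac{1}{2(\mu-1)}\cdot\dfrac{\mu n}{2}$ yields precisely \eqref{EGP18}. The argument is essentially a formal power-series computation; the only point requiring a little care is the bookkeeping of the scalar factors together with the justification that the Apostol--Euler expansion is valid on the relevant disc, so I do not anticipate a genuine obstacle beyond that verification.
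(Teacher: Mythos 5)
Your proof is correct and follows essentially the same route as the paper's: factor the denominator as $(1-\mu)\left(\frac{\lambda}{1-\mu}e^{t}+1\right)$, recognize the Apostol--Euler generating function \eqref{Apostol-Euler}, multiply in the linear factor $2-\mu+\frac{\mu}{2}t$, reindex the shifted sum, and equate coefficients of $\frac{t^{n}}{n!}$. The only cosmetic difference is that the paper extracts the constant as $\frac{1}{2(\mu-1)}\left((\mu-2)-\frac{\mu}{2}t\right)$ at the outset, whereas you carry $\frac{1}{2(1-\mu)}$ through and adjust the signs at the end.
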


\begin{proof}
We can reformulate \eqref{EGP1} as follows
\begin{small}
\begin{align*}
\sum_{n = 0}^{\infty}\mathfrak{V_{n}}(x;\lambda;\mu)\dfrac{t^{n}}{n!}
& = \left(\frac{1}{2(\mu-1)}\right)\left((\mu-2) -  \frac{\mu}{2}t\right)\left(\frac{2}{1+\frac{\lambda}{1-\mu}e^{t}}\right)e^{xt}\\
& = \frac{1}{2(\mu-1)}\left[(\mu-2)\sum_{n = 0}^{\infty}\mathfrak{E}_{n}\left(x;\frac{\lambda}{1-\mu}\right)\dfrac{t^{n}}{n!} - \frac{\mu}{2}\sum_{n = 1}^{\infty}\mathfrak{E}_{n}\left(x;\frac{\lambda}{1-\mu}\right)\dfrac{t^{n+1}}{n!}\right]\\
& = \frac{1}{2(\mu-1)}\sum_{n = 1}^{\infty}\left[(\mu-2)\mathfrak{E}_{n}\left(x;\frac{\lambda}{1-\mu}\right) -\frac{\mu n}{2}\mathfrak{E}_{n-1}\left(x;\frac{\lambda}{1-\mu}\right)\right]\dfrac{t^{n}}{n!}.
\end{align*}
\end{small}
\noindent Equating the coefficients of $\frac{t^{n}}{n!}$ on both sides, we obtain Identity \eqref{EGP18}.
\end{proof}
Here, we give an explicit formula as a dual convex combination of classical Bernoulli and Euler polynomials of Apostol type.
\begin{theorem}
Let $n$ be nonnegative integer and a real number $\mu \not = 1$, we have
\begin{small}
\begin{equation}\label{EGP19}
\mathfrak{V}_{n}(x;\lambda;\mu) = \frac{1}{1-\mu}\left[\left(1-\frac{\mu}{2}\right)\mathfrak{E}_{n}\left(x;\frac{\lambda}{1-\mu}\right) - \frac{\mu}{2} \mathfrak{B}_{n}\left(x;\frac{\lambda}{\mu-1}\right)\right].
\end{equation}
\end{small}
\end{theorem}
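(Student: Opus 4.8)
The plan is to proceed exactly as in the proof of Theorem~\ref{Apostol-EB-polnomials}, but to match the two pieces of the numerator against the Apostol--Euler and the Apostol--Bernoulli generating functions rather than against two shifted copies of the Apostol--Euler one. Concretely, I would split the numerator of \eqref{EGP1} as $2-\mu+\frac{\mu}{2}t=(2-\mu)+\frac{\mu}{2}t$, so that
\begin{equation*}
\dfrac{2-\mu+\frac{\mu}{2}t}{\lambda e^{t}+(1-\mu)}e^{xt}
=\dfrac{2-\mu}{\lambda e^{t}+(1-\mu)}e^{xt}+\dfrac{\tfrac{\mu}{2}\,t}{\lambda e^{t}+(1-\mu)}e^{xt},
\end{equation*}
and then identify each summand, after rescaling the parameter, with a known generating function.

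For the first summand I factor $(1-\mu)$ out of the denominator via $\lambda e^{t}+(1-\mu)=(1-\mu)\bigl(\tfrac{\lambda}{1-\mu}e^{t}+1\bigr)$ and write $2-\mu=2\bigl(1-\tfrac{\mu}{2}\bigr)$; this turns it into $\tfrac{1}{1-\mu}\bigl(1-\tfrac{\mu}{2}\bigr)\cdot\tfrac{2}{\frac{\lambda}{1-\mu}e^{t}+1}e^{xt}$, which by \eqref{Apostol-Euler} (with parameter $\lambda/(1-\mu)$) equals $\tfrac{1}{1-\mu}\bigl(1-\tfrac{\mu}{2}\bigr)\sum_{n\ge0}\mathfrak{E}_{n}\bigl(x;\tfrac{\lambda}{1-\mu}\bigr)\tfrac{t^{n}}{n!}$. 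For the second summand I instead factor $(\mu-1)$ out via $\lambda e^{t}+(1-\mu)=(\mu-1)\bigl(\tfrac{\lambda}{\mu-1}e^{t}-1\bigr)$; since the factor $t$ in the numerator is precisely the $t$ occurring in \eqref{Apostol-Bernoulli}, this summand becomes $\tfrac{\mu}{2(\mu-1)}\cdot\tfrac{t}{\frac{\lambda}{\mu-1}e^{t}-1}e^{xt}=-\tfrac{\mu}{2(1-\mu)}\sum_{n\ge0}\mathfrak{B}_{n}\bigl(x;\tfrac{\lambda}{\mu-1}\bigr)\tfrac{t^{n}}{n!}$. Adding the two series and equating the coefficients of $t^{n}/n!$ yields \eqref{EGP19}.

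There is essentially no obstacle here: unlike in Theorem~\ref{Apostol-EB-polnomials} there is not even an index shift to manage, because the $t$ needed for the Apostol--Bernoulli generating function is already present in the numerator. The only points worth a word are that the hypothesis $\mu\ne1$ is exactly what makes the two factorizations (by $1-\mu$ and by $\mu-1$) legitimate, that the sign coming from $\tfrac{1}{\mu-1}=-\tfrac{1}{1-\mu}$ must be tracked carefully, and that for $0\le\mu<1$ the rescaled argument $\lambda/(\mu-1)$ is negative, so the identity is to be read with $\mathfrak{B}_{n}(x;\cdot)$ interpreted through the Taylor expansion in \eqref{Apostol-Bernoulli}, which defines these polynomials for every parameter other than $1$.

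Finally, as a consistency check I would confirm that \eqref{EGP19} agrees with Theorem~\ref{Apostol-EB-polnomials}: comparing \eqref{Apostol-Bernoulli} at parameter $-\nu$ with \eqref{Apostol-Euler} at parameter $\nu$ gives the elementary identity $\mathfrak{B}_{n}(x;-\nu)=-\tfrac{n}{2}\mathfrak{E}_{n-1}(x;\nu)$, and substituting this with $\nu=\lambda/(1-\mu)$ into \eqref{EGP19} recovers \eqref{EGP18}.
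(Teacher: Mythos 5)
Your decomposition is exactly the one the paper uses: splitting the numerator as $(2-\mu)+\frac{\mu}{2}t$ and factoring $1-\mu$ (resp.\ $\mu-1$) out of the denominator to recognize the Apostol--Euler series at parameter $\frac{\lambda}{1-\mu}$ and the Apostol--Bernoulli series at parameter $\frac{\lambda}{\mu-1}$, then equating coefficients of $\frac{t^{n}}{n!}$. Your proof is correct and essentially identical to the paper's; the added remarks on signs, the parameter range, and the consistency check against \eqref{EGP18} are fine but not needed.
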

\begin{proof}
From \eqref{EGP1}, we have
\begin{small}
\begin{equation*}
\sum_{n = 0}^{\infty}\mathfrak{V_{n}}(x;\lambda;\mu)\dfrac{t^{n}}{n!} = \dfrac{2- \mu + \frac{\mu}{2}t}{\lambda e^{t} + (1-\mu)}e^{xt} = \dfrac{2-\mu}{2(1-\mu)}\dfrac{2}{\frac{\lambda}{1-\mu} e^{t} + 1}e^{xt} + \dfrac{\mu}{2(\mu-1)}\dfrac{t}{\frac{\lambda}{\mu-1} e^{t} -1}e^{xt}.
\end{equation*}
\end{small}
Using \eqref{Apostol-Bernoulli} and \eqref{Apostol-Euler} leads to get \eqref{EGP19}.
\end{proof}
\begin{theorem}
For $\mu \not = 1$ and $n \geq 1$, the following formula holds:
\begin{small}
\begin{equation*}
(\mu-1)\sum_{k = 1}^{n}\binom{n}{k}\Upsilon_{n-k,k}\left(\frac{x}{2},\lambda,\mu\right) + \left(\frac{\mu}{2} -1\right)\Delta_{n,0}\left(\frac{x}{2},\frac{\lambda}{1- \mu}\right) = n(\mu-2)x^{n-1}-n(n-1)\mu x^{n-2},
\end{equation*}
\end{small}

\noindent where \ $\Upsilon_{n,k}(x,\lambda,\mu) = \left| \begin{array}{cc}
\frac{\lambda}{1-\mu} \mathfrak{B}_{n}(x+1,\lambda) & \hspace{-1.3cm} \mathfrak{V}_{k}(x;\lambda;\mu) \\
\hspace{-1.3cm} \mathfrak{B}_{n}(x,\lambda)     &  \frac{\lambda}{1-\mu} \mathfrak{V}_{k}(x+1;\lambda;\mu)
\end{array} \right|$.
\end{theorem}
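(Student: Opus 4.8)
The plan is to imitate the proof of Theorem~\ref{determinantal-representation-Apostol-EB}, but with the Apostol--Euler factor replaced by the unified generating function \eqref{EGP1} and with the Apostol--Bernoulli factor taken with the shifted parameter $\nu:=\frac{\lambda}{1-\mu}$ (this is the parameter already carried by $\Delta_{n,0}$ in the statement, and the leading coefficient $\frac{\lambda}{1-\mu}$ of the determinant $\Upsilon_{n,k}$ signals that its Bernoulli entries should be read with parameter $\nu$ as well). The shift is forced by the factorization $\lambda e^{t}+(1-\mu)=(1-\mu)\bigl(\nu e^{t}+1\bigr)$, which makes the product
\begin{equation*}
S(y,t):=\frac{t}{\nu e^{t}-1}\,e^{yt}\cdot\frac{2-\mu+\tfrac{\mu}{2}t}{\lambda e^{t}+(1-\mu)}\,e^{yt}=\frac{t\bigl(2-\mu+\tfrac{\mu}{2}t\bigr)}{(1-\mu)\bigl(\nu^{2}e^{2t}-1\bigr)}\,e^{2yt}
\end{equation*}
collapse to a function of $e^{2t}$, exactly as $T(x,\lambda,t)$ did. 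Consequently $\nu^{2}S(y+1,t)-S(y,t)=\frac{t\left(2-\mu+\tfrac{\mu}{2}t\right)}{1-\mu}\,e^{2yt}$ telescopes, and setting $y=\tfrac{x}{2}$ and reading off the coefficient of $\tfrac{t^{n}}{n!}$ gives $\frac{1}{1-\mu}\left[(2-\mu)\,nx^{n-1}+\tfrac{\mu}{2}\,n(n-1)x^{n-2}\right]$.

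Next I would compute the same quantity combinatorially. Writing $S(y,t)$ as the Cauchy product of the two exponential generating series $\sum_{n}\mathfrak{B}_{n}(y,\nu)\tfrac{t^{n}}{n!}$ and $\sum_{n}\mathfrak{V}_{n}(y;\lambda;\mu)\tfrac{t^{n}}{n!}$, and likewise at $y+1$, one obtains
\begin{equation*}
\nu^{2}S(y+1,t)-S(y,t)=\sum_{n\ge0}\left(\sum_{k=0}^{n}\binom{n}{k}\Bigl[\nu^{2}\mathfrak{B}_{n-k}(y+1,\nu)\mathfrak{V}_{k}(y+1;\lambda;\mu)-\mathfrak{B}_{n-k}(y,\nu)\mathfrak{V}_{k}(y;\lambda;\mu)\Bigr]\right)\frac{t^{n}}{n!},
\end{equation*}
and the bracket is precisely the expansion of the $2\times2$ determinant $\Upsilon_{n-k,k}(y,\lambda,\mu)$. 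Equating coefficients of $\tfrac{t^{n}}{n!}$ at $y=\tfrac{x}{2}$ yields the intermediate identity
\begin{equation*}
\sum_{k=0}^{n}\binom{n}{k}\Upsilon_{n-k,k}\!\left(\tfrac{x}{2},\lambda,\mu\right)=\frac{1}{1-\mu}\left[(2-\mu)\,nx^{n-1}+\tfrac{\mu}{2}\,n(n-1)x^{n-2}\right].
\end{equation*}

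Finally I would detach the $k=0$ summand, which falls outside the telescoping pattern. Since $\mathfrak{V}_{0}(x;\lambda;\mu)=\frac{2-\mu}{\lambda+(1-\mu)}=\frac{2-\mu}{(1-\mu)(\nu+1)}$ and $\mathfrak{E}_{0}(x;\nu)=\frac{2}{\nu+1}$ are both independent of $x$, the determinants defining $\Upsilon_{n,0}$ and $\Delta_{n,0}$ factor through the common quantity $\nu^{2}\mathfrak{B}_{n}(y+1,\nu)-\mathfrak{B}_{n}(y,\nu)$, so $\Upsilon_{n,0}(y,\lambda,\mu)=\frac{2-\mu}{2(1-\mu)}\,\Delta_{n,0}(y,\nu)$ and hence $(\mu-1)\,\Upsilon_{n,0}(y,\lambda,\mu)=\left(\tfrac{\mu}{2}-1\right)\Delta_{n,0}\!\left(y,\tfrac{\lambda}{1-\mu}\right)$. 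Multiplying the intermediate identity through by $(\mu-1)$, splitting off the $k=0$ term and moving it to the left-hand side yields the asserted formula. The one genuinely delicate point is the first step: spotting that the Bernoulli parameter must be shifted from $\lambda$ to $\frac{\lambda}{1-\mu}$ so that the two denominators multiply to $(1-\mu)\bigl(\nu^{2}e^{2t}-1\bigr)$ and the telescoping survives; everything after that is binomial-convolution bookkeeping, with the case $n=1$ needing no separate treatment because the coefficient $n(n-1)$ of $x^{n-2}$ vanishes there.
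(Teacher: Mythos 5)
Your route is genuinely different from the paper's: the paper substitutes Theorem~\ref{Apostol-EB-polnomials} into the $\mathfrak{V}$-entries of $\Upsilon$, reducing each $\Upsilon_{n-k,k}$ to a combination of $\Delta_{n-k,k}$ and $\Delta_{n-k,k-1}$ at parameter $\frac{\lambda}{1-\mu}$, and then evaluates the two resulting sums by invoking Theorem~\ref{determinantal-representation-Apostol-EB}; you instead rerun the telescoping product argument behind Theorem~\ref{determinantal-representation-Apostol-EB} directly, pairing the Apostol--Bernoulli generating function at parameter $\nu=\frac{\lambda}{1-\mu}$ with the unified generating function \eqref{EGP1}. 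The mechanics of your argument are sound: the collapse of $S(y,t)$ to a function of $e^{2t}$, the telescoping, the Cauchy-product identification of the bracket with $\Upsilon_{n-k,k}$ (with the Bernoulli entries read at parameter $\frac{\lambda}{1-\mu}$, which is exactly the convention the paper's own proof tacitly uses), and the relation $(\mu-1)\Upsilon_{n,0}=\left(\frac{\mu}{2}-1\right)\Delta_{n,0}\left(y,\frac{\lambda}{1-\mu}\right)$ are all correct.

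The problem is your last sentence. Your intermediate identity, multiplied by $(\mu-1)$ and with the $k=0$ term split off, yields the right-hand side $n(\mu-2)x^{n-1}-\frac{\mu}{2}\,n(n-1)x^{n-2}$, whereas the statement asserts $n(\mu-2)x^{n-1}-\mu\, n(n-1)x^{n-2}$; you claim these coincide, but they differ by a factor $2$ in the second term, and you never account for it. In fact your computation is the correct one: in the paper's proof the coefficient $\frac{\mu k}{2}$ coming from \eqref{EGP18} is transcribed as $\mu k$, and this lost $\frac{1}{2}$ propagates into the printed right-hand side. A direct check at $n=2$ confirms this: take $\lambda=1$, $\mu=\frac{1}{2}$, $x=0$, so $\nu=2$, $\mathfrak{V}_1(x)=x-\frac{1}{2}$, $\Upsilon_{1,1}(0)=\frac{5}{2}$, $\Delta_{2,0}(0,2)=-\frac{8}{3}$, and the left-hand side equals $(-\tfrac{1}{2})(5)+(-\tfrac{3}{4})(-\tfrac{8}{3})=-\tfrac{1}{2}=-\mu$, matching your $-\frac{\mu}{2}n(n-1)x^{n-2}$ rather than the printed $-2\mu$. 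So your argument actually proves the corrected identity, but as a proof of the literal statement there is a genuine gap at the final step: you should either flag the factor-$\frac{1}{2}$ correction to the statement (and to the implicit parameter of the Bernoulli entries in $\Upsilon$) or explain where a compensating factor would come from --- it does not exist.
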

\begin{proof}From Theorem \ref{Apostol-EB-polnomials}, we have
\begin{small}
\begin{align*}
& \sum_{k = 1}^{n}\binom{n}{k}\Upsilon_{n-k,k}\left(x,\lambda,\mu\right) \\& = \frac{1}{2(\mu-1)}\left\{(\mu-2)\sum_{k = 1}^{n}\binom{n}{k}\Delta_{n-k,k}\left(x,\frac{\lambda}{1- \mu}\right) -\mu \sum_{k = 1}^{n}k\binom{n}{k}\Delta_{n-k,k-1}\left(x,\frac{\lambda}{1- \mu}\right)\right\}\\
& = \frac{1}{2(\mu-1)}\left\{(\mu-2)\sum_{k = 1}^{n}\binom{n}{k}\Delta_{n-k,k}\left(x,\frac{\lambda}{1- \mu}\right) -\mu n \sum_{k = 1}^{n}\binom{n-1}{k-1}\Delta_{n-k,k-1}\left(x,\frac{\lambda}{1- \mu}\right)\right\}.
\end{align*}
\end{small}
Applying Theorem  \ref{determinantal-representation-Apostol-EB} and a straightforward computation, we obtain

\begin{align*}
\sum_{k = 1}^{n}\binom{n}{k}\Upsilon_{n-k,k}\left(x,\lambda,\mu\right) &= \frac{1}{\mu - 1}\left\{n(\mu-2)(2x)^{n-1} - n(n-1)\mu (2x)^{n-2}\right\} \\
                                                                       &- \frac{\mu - 2}{2(\mu - 1)}\Delta_{n,0}\left(x,\frac{\lambda}{1- \mu}\right).
\end{align*}
Multiplying both sides by $(\mu - 1)$, we get the desired identity.
\end{proof}
\section{Derivation and integration representations of unified Bernoulli-Euler polynomials of Apostol type}
In this section, we present derivation and integration representations for the unified Bernoulli-Euler polynomials of Apostol type.
\begin{theorem}\label{TH4}
Let $l, n$ be two nonnegative integers. Then
\begin{equation}\label{EGP7}
\hspace{-3.5cm} \dfrac{d^{l}}{dx^{l}}\mathfrak{V}_{n}(x;\lambda;\mu) = (n)_{l}\mathfrak{V}_{n-l}(x;\lambda;\mu),
\end{equation}
\begin{equation}\label{EGP8}
\int_{x}^{y}\mathfrak{V}_{n}(z;\lambda;\mu)dz  = \dfrac{1}{(n+1)}\left[\mathfrak{V}_{n+1}(y;\lambda;\mu) - \mathfrak{V}_{n+1}(x;\lambda;\mu)\right],
\end{equation}
where \quad  $(x)_{n} := x(x-1)\cdots(x-n+1)$ \quad with \quad $(x)_{0} = 1$.
\end{theorem}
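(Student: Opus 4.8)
The plan is to exploit the Appell-type structure already visible in \eqref{expilcite-formula-representation}: I would first establish \eqref{EGP7} in the case $l=1$ and then iterate. Starting from the generating function \eqref{EGP1} and differentiating both sides with respect to $x$, the left-hand side acquires a factor $t$ since $\frac{\partial}{\partial x}e^{xt}=t\,e^{xt}$, so
$$\sum_{n\geq 0}\frac{d}{dx}\mathfrak{V}_{n}(x;\lambda;\mu)\frac{t^{n}}{n!}=t\sum_{n\geq 0}\mathfrak{V}_{n}(x;\lambda;\mu)\frac{t^{n}}{n!}=\sum_{n\geq 1}n\,\mathfrak{V}_{n-1}(x;\lambda;\mu)\frac{t^{n}}{n!},$$
and equating the coefficients of $t^{n}/n!$ gives $\frac{d}{dx}\mathfrak{V}_{n}(x;\lambda;\mu)=n\,\mathfrak{V}_{n-1}(x;\lambda;\mu)$. (Equivalently, one can differentiate the explicit polynomial expression \eqref{expilcite-formula-representation} term by term and re-index the sum.) A straightforward induction on $l$ then yields
$$\frac{d^{l}}{dx^{l}}\mathfrak{V}_{n}(x;\lambda;\mu)=n(n-1)\cdots(n-l+1)\,\mathfrak{V}_{n-l}(x;\lambda;\mu)=(n)_{l}\,\mathfrak{V}_{n-l}(x;\lambda;\mu),$$
which is \eqref{EGP7}; in the degenerate range $l>n$ both sides vanish, the left because a degree-$n$ polynomial has vanishing $(n{+}1)$-st derivative, the right because the falling factorial $(n)_{l}$ then contains a zero factor.

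For \eqref{EGP8} I would simply invoke the case $l=1$ of \eqref{EGP7}: it shows that $\frac{1}{n+1}\mathfrak{V}_{n+1}(z;\lambda;\mu)$ is an antiderivative of $\mathfrak{V}_{n}(z;\lambda;\mu)$, each $\mathfrak{V}_{m}(\cdot;\lambda;\mu)$ being a polynomial and hence continuous. The fundamental theorem of calculus then immediately gives
$$\int_{x}^{y}\mathfrak{V}_{n}(z;\lambda;\mu)\,dz=\Bigl[\tfrac{1}{n+1}\mathfrak{V}_{n+1}(z;\lambda;\mu)\Bigr]_{z=x}^{z=y}=\frac{1}{n+1}\bigl[\mathfrak{V}_{n+1}(y;\lambda;\mu)-\mathfrak{V}_{n+1}(x;\lambda;\mu)\bigr].$$

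I do not expect a genuine obstacle here: the statement is a direct consequence of the Appell property encoded in \eqref{EGP1}, and the only steps calling for a word of care are the index shift when extracting coefficients in the first display and the handling of the range $l>n$, both of which are absorbed automatically by the falling-factorial notation $(x)_{n}$ introduced in the statement.
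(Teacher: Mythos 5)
Your argument is correct and follows essentially the same route as the paper: differentiating the generating function \eqref{EGP1} in $x$ to get the $l=1$ case, inducting on $l$ for \eqref{EGP7}, and then integrating the $l=1$ relation to obtain \eqref{EGP8}. Your write-up merely fills in the coefficient-extraction and the degenerate range $l>n$ explicitly, which the paper leaves implicit.
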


\begin{proof}
The assertion \eqref{EGP7} follows from \eqref{EGP1} by successive differentiations with respect to $x$ and then uses the induction principle on $l$. Furthermore, taking $l = 1$ in (\ref{EGP7}) and integrating both sides of the resulting equation with respect to $z$ over the interval $[x, y], (y > x)$, we obtain the Integral Formula (\ref{EGP8}).
\end{proof}
\begin{rem}
Setting $\lambda = 1, \mu = 2$ in \eqref{EGP7} and \eqref{EGP8}, we obtain known results due to Luo et \emph{al}. \emph{\cite{Luo-Srivastava}}.
\end{rem}

\begin{Cor}
Let $n$ be a nonnegative integer. Then

\begin{equation}\label{Int1}
\int_{x}^{x+y}\mathfrak{V}_{n}(z;\lambda;\mu)dz = \dfrac{1}{(n+1)}\sum_{k = 0}^{n}\binom{n}{k}\mathfrak{V}_{k}(x;\lambda;\mu)y^{n-(k-1)}.
\end{equation}
\end{Cor}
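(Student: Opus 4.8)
The plan is to combine the two results already established in this section: the closed form for the definite integral in Theorem~\ref{TH4}, equation~\eqref{EGP8}, and the binomial addition formula of Theorem~\ref{TH2}, equation~\eqref{EGP4}. First I would apply \eqref{EGP8} with the upper limit of integration taken to be $x+y$ rather than a generic $y$, which gives
\begin{equation*}
\int_{x}^{x+y}\mathfrak{V}_{n}(z;\lambda;\mu)\,dz = \frac{1}{n+1}\left[\mathfrak{V}_{n+1}(x+y;\lambda;\mu) - \mathfrak{V}_{n+1}(x;\lambda;\mu)\right].
\end{equation*}

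Next I would expand the first term on the right by invoking \eqref{EGP4} with $n$ replaced by $n+1$, namely
\begin{equation*}
\mathfrak{V}_{n+1}(x+y;\lambda;\mu) = \sum_{k=0}^{n+1}\binom{n+1}{k}\mathfrak{V}_{k}(x;\lambda;\mu)\,y^{n+1-k}.
\end{equation*}
The term corresponding to $k = n+1$ equals $\mathfrak{V}_{n+1}(x;\lambda;\mu)$, and it cancels against the second term inside the bracket. What remains, after writing $n+1-k = n-(k-1)$, is exactly a sum over $k = 0,\dots,n$ of the shape claimed; here one uses the elementary identity $\tfrac{1}{n+1}\binom{n+1}{k} = \tfrac{1}{n+1-k}\binom{n}{k}$ to put the coefficients in the normalisation displayed in \eqref{Int1}.

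An equivalent route, which bypasses \eqref{EGP8}, is the substitution $z = x+u$: then $\int_{x}^{x+y}\mathfrak{V}_{n}(z;\lambda;\mu)\,dz = \int_{0}^{y}\mathfrak{V}_{n}(x+u;\lambda;\mu)\,du$, and applying \eqref{EGP4} directly to the integrand and integrating term by term yields the same expression. Either way the argument is purely mechanical; the only point requiring a moment's care is the bookkeeping — confirming that the boundary term $\mathfrak{V}_{n+1}(x;\lambda;\mu)$ is precisely the $k=n+1$ contribution so that the two cancel, and keeping the binomial-coefficient normalisation consistent. I do not expect any genuine obstacle.
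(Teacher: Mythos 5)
Your route is the same as the paper's: replace $y$ by $x+y$ in \eqref{EGP8} and expand $\mathfrak{V}_{n+1}(x+y;\lambda;\mu)$ via \eqref{EGP4} with $n$ replaced by $n+1$; the cancellation of the $k=n+1$ term against the boundary term is exactly right. The problem is your final sentence. What the computation actually yields is
\begin{equation*}
\int_{x}^{x+y}\mathfrak{V}_{n}(z;\lambda;\mu)\,dz
=\frac{1}{n+1}\sum_{k=0}^{n}\binom{n+1}{k}\mathfrak{V}_{k}(x;\lambda;\mu)\,y^{\,n+1-k}
=\sum_{k=0}^{n}\frac{1}{n+1-k}\binom{n}{k}\mathfrak{V}_{k}(x;\lambda;\mu)\,y^{\,n+1-k},
\end{equation*}
and the identity you invoke, $\frac{1}{n+1}\binom{n+1}{k}=\frac{1}{n+1-k}\binom{n}{k}$, while true, produces the right-hand expression above and \emph{not} the expression displayed in \eqref{Int1}, which carries the uniform prefactor $\frac{1}{n+1}$ together with $\binom{n}{k}$; the two differ for every $k\geq 1$. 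A concrete check with $n=1$, $\lambda=1$, $\mu=0$ (so $\mathfrak{V}_{1}(x;1;0)=E_{1}(x)=x-\tfrac12$) gives $\int_{x}^{x+y}\bigl(z-\tfrac12\bigr)dz=xy+\tfrac{y^{2}}{2}-\tfrac{y}{2}$, which matches the corrected form but not \eqref{Int1} as printed, which would give $\tfrac{y^{2}}{2}+\tfrac{xy}{2}-\tfrac{y}{4}$. So the defect lies in the displayed statement (the binomial coefficient should be $\binom{n+1}{k}$, or equivalently the factor $\frac{1}{n+1}$ should be replaced by $\frac{1}{n+1-k}$ inside the sum); your derivation, like the paper's one-line proof, in fact establishes the corrected identity, but your claim that it lands "exactly" on \eqref{Int1} asserts a false equality and should instead flag the misprint. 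Your alternative route via $z=x+u$ and termwise integration of \eqref{EGP4} is fine and leads to the same corrected form.
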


\begin{proof}
Replacing $y$ by $x+y$ in the Integral Formula \eqref{EGP8} and using Formula \eqref{EGP4}, by successive calculations, we obtain the Integral Formula \eqref{Int1}.
\end{proof}

\begin{theorem}\label{TH3}
For $\mu \in \mathbb{R}_{+}^{*} - \{1,2\}$ and $n$ nonnegative integer, the following formula holds:
\begin{align}\label{EGP11}
& \mathfrak{V}_{n+1}(x;\lambda;\mu) - x\mathfrak{V}_{n}(x;\lambda;\mu) =  \dfrac{1}{2-\mu}\sum_{i = 0}^{n}\binom{n}{i}(n-i)! \nonumber \\
& \times \left(\dfrac{\mu}{2(\mu-2)}\right)^{n-i} \left[ \frac{\mu}{2}\mathfrak{V}_{i}(x;\lambda;\mu) - \lambda \sum_{k=0}^{i}\binom{i}{k}\mathfrak{V}_{k}(\lambda;\mu)\mathfrak{V}_{i-k}(x+1;\lambda;\mu)\right].
\end{align}
\end{theorem}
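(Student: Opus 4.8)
The plan is to work throughout at the level of generating functions. Write $F(x,t)=\dfrac{2-\mu+\frac{\mu}{2}t}{\lambda e^{t}+(1-\mu)}\,e^{xt}=\sum_{n\ge 0}\mathfrak{V}_{n}(x;\lambda;\mu)\dfrac{t^{n}}{n!}$ as in \eqref{EGP1}, and observe that the left-hand side of \eqref{EGP11} is precisely the coefficient sequence of $\partial_{t}F-xF=\sum_{n\ge 0}\bigl(\mathfrak{V}_{n+1}(x;\lambda;\mu)-x\,\mathfrak{V}_{n}(x;\lambda;\mu)\bigr)\dfrac{t^{n}}{n!}$. So it is enough to compute $\partial_{t}F-xF$ in closed form and extract its Taylor coefficients.

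First I would differentiate the defining identity $F(x,t)\bigl(\lambda e^{t}+(1-\mu)\bigr)=\bigl(2-\mu+\frac{\mu}{2}t\bigr)e^{xt}$ with respect to $t$, obtaining $\partial_{t}F\cdot\bigl(\lambda e^{t}+(1-\mu)\bigr)+\lambda e^{t}F=\frac{\mu}{2}e^{xt}+x\bigl(2-\mu+\frac{\mu}{2}t\bigr)e^{xt}$. Feeding the original identity back in to rewrite the last term as $x\,F\bigl(\lambda e^{t}+(1-\mu)\bigr)$, the $x$-terms combine and I get $\bigl(\partial_{t}F-xF\bigr)\bigl(\lambda e^{t}+(1-\mu)\bigr)=\frac{\mu}{2}e^{xt}-\lambda e^{t}F$. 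Then I use the two elementary relations $\dfrac{1}{\lambda e^{t}+(1-\mu)}=\dfrac{g(t)}{2-\mu+\frac{\mu}{2}t}$, where $g(t):=F(0,t)=\sum_{n\ge 0}\mathfrak{V}_{n}(\lambda;\mu)\frac{t^{n}}{n!}$, and $e^{t}F(x,t)=F(x+1,t)$, both immediate from \eqref{EGP1} and \eqref{EGP-1}, to arrive at
\[
\partial_{t}F-xF=\frac{1}{2-\mu+\frac{\mu}{2}t}\Bigl[\tfrac{\mu}{2}\,F(x,t)-\lambda\,g(t)\,F(x+1,t)\Bigr].
\]

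It then remains to expand the two factors and multiply. Since $\mu\ne 2$ we have the geometric expansion $\dfrac{1}{2-\mu+\frac{\mu}{2}t}=\dfrac{1}{2-\mu}\sum_{j\ge 0}\Bigl(\dfrac{\mu}{2(\mu-2)}\Bigr)^{j}t^{j}$, valid for small $t$, while the Cauchy product of exponential generating functions gives $g(t)\,F(x+1,t)=\sum_{i\ge 0}\Bigl(\sum_{k=0}^{i}\binom{i}{k}\mathfrak{V}_{k}(\lambda;\mu)\,\mathfrak{V}_{i-k}(x+1;\lambda;\mu)\Bigr)\dfrac{t^{i}}{i!}$, so that $\tfrac{\mu}{2}F(x,t)-\lambda g(t)F(x+1,t)=\sum_{i\ge 0}A_{i}\tfrac{t^{i}}{i!}$ with $A_{i}$ exactly the bracket appearing in \eqref{EGP11}. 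Multiplying the geometric series by $\sum_{i}A_{i}\tfrac{t^{i}}{i!}$ and picking off the coefficient of $\tfrac{t^{n}}{n!}$ (which converts $\tfrac{1}{i!}$ into $\tfrac{n!}{i!}=\binom{n}{i}(n-i)!$ and attaches $\bigl(\tfrac{\mu}{2(\mu-2)}\bigr)^{n-i}$ to $A_{i}$) yields \eqref{EGP11}. The one genuinely delicate step is the elimination of $x$ right after differentiating the functional equation: one must not expand $\partial_{t}$ of the quotient $F$ directly, but instead substitute the functional equation back into the $x$-term; everything after the displayed closed form is routine power-series bookkeeping, the only point needing care being the passage from coefficients of the ordinary geometric series to coefficients of an exponential generating function, which is exactly what produces the factorial weight $(n-i)!$.
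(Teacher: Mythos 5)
Your proposal is correct and follows essentially the same route as the paper's proof: differentiate the generating function \eqref{EGP1} with respect to $t$, rewrite the result using \eqref{EGP1} and \eqref{EGP-1} so that the factor $\bigl(2-\mu+\frac{\mu}{2}t\bigr)^{-1}$ appears, expand it as a geometric series, and equate coefficients of $t^{n}/n!$, which is exactly the paper's equation \eqref{EGP10}. The only cosmetic difference is that you differentiate the cleared functional equation $F(x,t)\bigl(\lambda e^{t}+(1-\mu)\bigr)=\bigl(2-\mu+\frac{\mu}{2}t\bigr)e^{xt}$ instead of the quotient directly, which is a slightly tidier way of organizing the same computation.
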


\begin{proof}
Differentiating both sides of \eqref{EGP1} with respect to $t$, we express the factors $\left(1+ \frac{\mu}{2(2-\mu)}t\right)^{-1}$ in series form for $\left|t\right|<\frac{2}{\mu}\left|2-\mu\right|$, and using Formulas \eqref{EGP1} and \eqref{EGP-1}, we obtain
\begin{small}
\begin{align}\label{EGP10}
& \sum_{n = 0}^{\infty}\mathfrak{V}_{n+1}(x;\lambda;\mu)\dfrac{t^{n}}{n!} =x\sum_{n = 0}^{\infty}\mathfrak{V}_{n}(x;\lambda;\mu)\dfrac{t^{n}}{n!}+ \dfrac{1}{(2-\mu)}\left(\sum_{n = 0}^{\infty}\frac{\mu^{n}}{2^{n}(\mu-2)^{n}}t^{n}\right)\nonumber \\
& \times \left[\frac{\mu}{2}\sum_{n = 0}^{\infty}\mathfrak{V}_{n}(x;\lambda;\mu)\dfrac{t^{n}}{n!} - \lambda \left(\sum_{n = 0}^{\infty}\mathfrak{V}_{n}(\lambda;\mu)\dfrac{t^{n}}{n!}\right)\left(\sum_{n = 0}^{\infty}\mathfrak{V}_{n}(x+1;\lambda;\mu)\dfrac{t^{n}}{n!}\right)\right].
\end{align}
\end{small}
Then taking into account the series product in \eqref{EGP10} and equating the coefficients of $t$, we get Identity \eqref{EGP11}.
\end{proof}
\section{Identities inspired via umbral calculus}
Umbral calculus approach is a usefull tool to get and guess arithmetic and combinatorial identities, see for instense Gessel \cite{gess} on some applications of the classical umbral calculus, Di Crescenzo et \emph{al}. \cite{cre} on umbral calculus. See also classical references as those of Roman and Rota \cite{Roman-Rota-1978}.\\
Let $\mathbf{B}^{n}(\lambda;\mu)$ be the umbra defined by \ $\mathbf{B}^{n}(\lambda;\mu):=\mathfrak{V}_{n}\left(\lambda;\mu\right)$ and $\left(
\mathfrak{V}_{n}\left( x;\lambda;\mu\right)\right) _{n\geq 0}\ $ defined by %
\begin{equation*}
\sum_{n\geq 0}\mathfrak{V}_{n}\left( x;\lambda;\mu\right) \frac{t^{n}}{n!}=F\left( t\right)e^{xt} =\exp \left( \left(\mathbf{B}(\lambda;\mu)+x\right) t\right),
\end{equation*}%
where $F\left( t\right) :=\sum\limits_{n\geq 0}\mathfrak{V}_{n}\left(\lambda;\mu\right)\frac{t^{n}}{n!}=\exp \left( \mathbf{B}(\lambda;\mu) t\right).$
So,  $\mathfrak{V}_{n}\left( x;\lambda;\mu\right)$ admits the umbral representation
\begin{equation*}
\mathfrak{V}_{n}\left( x;\lambda;\mu\right) =\left( \mathbf{B}(\lambda;\mu)+x\right) ^{n}.
\end{equation*}%

\begin{theorem}
Let $n$ be nonnegative integer. Then
\begin{equation*}\label{EGP2}
\mathfrak{V}_{n}(x+1;\lambda;\mu) = \sum_{k = 0}^{n}\binom{n}{k}\mathfrak{V}_{k}(x;\lambda;\mu).
\end{equation*}
\end{theorem}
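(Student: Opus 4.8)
The plan is to read the identity straight off the umbral representation $\mathfrak{V}_{n}(x;\lambda;\mu)=(\mathbf{B}(\lambda;\mu)+x)^{n}$ just introduced, so that the proof is a one-line application of the ordinary binomial theorem to the umbra. The point is that the shift $x\mapsto x+1$ is a shift by a genuine scalar, so it interacts with the umbral block $\mathbf{B}(\lambda;\mu)+x$ exactly as in the classical addition formula.

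Concretely, I would proceed as follows. First, by the defining umbral relation, write
\begin{equation*}
\mathfrak{V}_{n}(x+1;\lambda;\mu)=\bigl(\mathbf{B}(\lambda;\mu)+x+1\bigr)^{n}=\Bigl(\bigl(\mathbf{B}(\lambda;\mu)+x\bigr)+1\Bigr)^{n}.
\end{equation*}
Next, expand the right-hand side by the binomial theorem, treating $\mathbf{B}(\lambda;\mu)+x$ and the scalar $1$ as the two summands:
\begin{equation*}
\Bigl(\bigl(\mathbf{B}(\lambda;\mu)+x\bigr)+1\Bigr)^{n}=\sum_{k=0}^{n}\binom{n}{k}\bigl(\mathbf{B}(\lambda;\mu)+x\bigr)^{k}1^{\,n-k}.
\end{equation*}
Finally, translate each power back through the umbral dictionary, $\bigl(\mathbf{B}(\lambda;\mu)+x\bigr)^{k}=\mathfrak{V}_{k}(x;\lambda;\mu)$, which yields $\mathfrak{V}_{n}(x+1;\lambda;\mu)=\sum_{k=0}^{n}\binom{n}{k}\mathfrak{V}_{k}(x;\lambda;\mu)$, as claimed.

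There is essentially no obstacle here; the only care needed is to make the umbral step legitimate, and this one does by falling back on the generating function $F(t)e^{xt}$ that defines the umbra: the umbral expansion above is just the coefficient comparison in $F(t)e^{(x+1)t}=e^{t}\,F(t)e^{xt}$ after expanding $e^{t}$ and using the Cauchy product. As a sanity check, the statement is precisely the $y:=1$ specialization of Theorem \ref{TH2}, so consistency with the power-series results of the previous sections is immediate; the umbral derivation merely records the slickest route to it.
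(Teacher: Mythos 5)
Your proposal is correct and follows the paper's own argument exactly: the paper likewise expands $\left(\mathbf{B}(\lambda;\mu)+(x+1)\right)^{n}$ binomially in the summands $\mathbf{B}(\lambda;\mu)+x$ and $1$, then reads each power $\left(\mathbf{B}(\lambda;\mu)+x\right)^{k}$ back as $\mathfrak{V}_{k}(x;\lambda;\mu)$. Your extra remark justifying the umbral step via $F(t)e^{(x+1)t}=e^{t}F(t)e^{xt}$ is a nice touch but not a different route.
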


\begin{proof}
By the umbral representation $\mathfrak{V}_{n}\left( x;\lambda;\mu\right) =\left( \mathbf{B}(\lambda;\mu)+x\right) ^{n}$, we have
\begin{small}
\begin{equation*}
\mathfrak{V}_{n}(x+1;\lambda;\mu) = \left( \mathbf{B}(\lambda;\mu)+(x+1)\right) ^{n}
                                   = \sum_{k = 0}^{n}\binom{n}{k}\left( \mathbf{B}(\lambda;\mu)+x\right) ^{k} = \sum_{k = 0}^{n}\binom{n}{k}\mathfrak{V}_{k}(x;\lambda;\mu).
\end{equation*}
\end{small}
\end{proof}

\begin{theorem} Let $n,m$ be nonnegative integers. Then%
\begin{small}
\begin{equation*}
\sum_{k=0}^{n}\binom{n}{k}y^{n-k} \mathfrak{V}_{m+k}\left( x;\lambda;\mu\right) =\sum_{k=0}^{m}%
\binom{m}{k}\left( -y\right) ^{m-k}\mathfrak{V}_{n+k}\left( x+y;\lambda;\mu\right).
\end{equation*}
\end{small}
\end{theorem}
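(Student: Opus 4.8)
The plan is to read both sides through the umbral representation $\mathfrak{V}_{n}(x;\lambda;\mu)=(\mathbf{B}(\lambda;\mu)+x)^{n}$ recorded just above, and to show that each of them is the single umbral polynomial $(\mathbf{B}(\lambda;\mu)+x)^{m}(\mathbf{B}(\lambda;\mu)+x+y)^{n}$ packaged in two different ways.

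First I would transform the left-hand side. Substituting $\mathfrak{V}_{m+k}(x;\lambda;\mu)=(\mathbf{B}(\lambda;\mu)+x)^{m+k}$, factoring out $(\mathbf{B}(\lambda;\mu)+x)^{m}$ and collapsing the remaining binomial sum (with $\mathbf{B}(\lambda;\mu)+x$ and $y$ in the two slots of the binomial theorem) gives
\begin{equation*}
\sum_{k=0}^{n}\binom{n}{k}y^{n-k}\mathfrak{V}_{m+k}(x;\lambda;\mu)=(\mathbf{B}(\lambda;\mu)+x)^{m}\bigl(\mathbf{B}(\lambda;\mu)+x+y\bigr)^{n}.
\end{equation*}
Next I would treat the right-hand side in the same way, now with $-y$ playing the role of the increment: writing $\mathfrak{V}_{n+k}(x+y;\lambda;\mu)=(\mathbf{B}(\lambda;\mu)+x+y)^{n+k}$, factoring out $(\mathbf{B}(\lambda;\mu)+x+y)^{n}$ and collapsing the binomial sum gives
\begin{equation*}
\sum_{k=0}^{m}\binom{m}{k}(-y)^{m-k}\mathfrak{V}_{n+k}(x+y;\lambda;\mu)=\bigl(\mathbf{B}(\lambda;\mu)+x+y\bigr)^{n}\bigl(\mathbf{B}(\lambda;\mu)+x+y-y\bigr)^{m}.
\end{equation*}
The right-hand factor simplifies to $(\mathbf{B}(\lambda;\mu)+x)^{m}$, and since the umbral product commutes the two displays agree; applying the linear evaluation $\mathbf{B}^{n}(\lambda;\mu)\mapsto\mathfrak{V}_{n}(\lambda;\mu)$ then yields the stated identity (expanding $(\mathbf{B}(\lambda;\mu)+x)^{m}(\mathbf{B}(\lambda;\mu)+x+y)^{n}$ twice by the binomial theorem reproduces the two sums directly).

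The only point worth spelling out is the license to expand $(\mathbf{B}(\lambda;\mu)+c)^{n}$ by the ordinary binomial theorem before the functional is applied; this is precisely what the classical umbral calculus of Roman and Rota \cite{Roman-Rota-1978} guarantees, and concretely it reduces to the fact that the functional $\mathbf{B}^{n}(\lambda;\mu)\mapsto\mathfrak{V}_{n}(\lambda;\mu)$ is linear while, by \eqref{EGP1}, $F(t)e^{ct}$ is the exponential generating function of $(\mathfrak{V}_{n}(c;\lambda;\mu))_{n\geq 0}$, so that $(\mathbf{B}(\lambda;\mu)+c)^{n}$ evaluates to $\mathfrak{V}_{n}(c;\lambda;\mu)$ for every real $c$. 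I do not expect a genuine obstacle here. A reader wary of umbral manipulations can instead form the bivariate exponential generating functions $\sum_{n,m}(\cdot)\,u^{n}v^{m}/(n!\,m!)$ of the two sides, evaluate the inner sums as $(y+D_{v})^{n}$ and $(-y+D_{u})^{m}$ applied to $F(v)e^{xv}$ and $F(u)e^{(x+y)u}$ respectively (where $D$ denotes $d/dt$), and then recognise both results as $F(u+v)e^{x(u+v)+yu}$ after using the shift operator $e^{cD}$; the matching of these two closed forms is the only real computation, and it is routine.
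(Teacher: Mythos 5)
Your argument is correct and is essentially the paper's own proof: both expand the single umbral product $(\mathbf{B}(\lambda;\mu)+x+y)^{n}(\mathbf{B}(\lambda;\mu)+x)^{m}$ in two ways, once via $(\mathbf{B}(\lambda;\mu)+x)+y$ and once via $(\mathbf{B}(\lambda;\mu)+x+y)-y$, and then evaluate the umbra. The extra remarks on linearity of the evaluation and the generating-function alternative are sound but not a different method.
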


\begin{proof}
By the umbral representation $\mathfrak{V}_{n}\left( x;\lambda;\mu\right) =\left( \mathbf{B}(\lambda;\mu)+x\right) ^{n}$,
on the one hand, we have
\begin{small}
\begin{align*}
(\mathbf{B}(\lambda;\mu)+(x+y)) ^{n}(\mathbf{B}(\lambda;\mu)+x)^{m} & = \left( \mathbf{B}(\lambda;\mu)+(x+y) \right) ^{n}\left( \mathbf{B}(\lambda;\mu)+(x+y)-y\right) ^{m} \\
& = \sum_{k=0}^{m}\binom{m}{k}\left( -y\right)^{m-k}\left( \mathbf{B}(\lambda;\mu)%
+(x+y)\right) ^{n+k} \\
& = \sum_{k=0}^{m}\binom{m}{k}\left( -y\right)^{m-k}\mathfrak{V}_{n+k}\left( x+y;\lambda;\mu\right),
\end{align*}
\end{small}
and on the other hand, we have
\begin{small}
\begin{align*}
(\mathbf{B}(\lambda;\mu)+(x+y))^{n}(\mathbf{B}(\lambda;\mu)+x)^{m} & = \sum_{k=0}^{n}\binom{n}{k}y^{n-k}\left( \mathbf{B}(\lambda;\mu)+x\right) ^{m+k} \\
& = \sum_{k=0}^{n}\binom{n}{k}y^{n-k}\mathfrak{V}_{m+k}(x;\lambda;\mu).
\end{align*}
\end{small}
Hence, the two expressions give the desired identity.
\end{proof}

\bibliography{Ref.bib}
\bibliographystyle{acm}
\end{document}